\newtheorem{thm}{Theorem}[section]
\newtheorem{lem}[thm]{Lemma}
\newtheorem{prop}[thm]{Proposition}
\theoremstyle{dfn}
\newtheorem{defn}[thm]{Definition}
\theoremstyle{rem}
\numberwithin{equation}{section}
\newcommand{\e}{\varepsilon}
\DeclareMathOperator{\mis}{\mathfrak{M}}
\newcommand{\C}{\mathbb{C}}
\newcommand{\U}{\mathbf{1}}
\newcommand{\Sp}{\text{\textsc{sp}}}
\newcommand{\vsp}{\vec{\text{\textsc{sp}}}}
\newcommand{\pol}{\mathscr{P}}
\renewcommand{\c}{\mathscr{C}}
\newcommand{\bfa}{\mathbf{a}}
\newcommand{\set}[1]{\{#1\}}
\newcommand{\enorm}{\lVert\,\cdot\,\rVert}
\newcommand{\X}{{\triangle^{\!*}}}
\begin{document}

%
%
%
%
%
%
%
%

\title{A Characterization of Polynomially Convex Sets in Banach Spaces}

\author{Mortaza Abtahi\footnote{School of Mathematics and Computer Sciences,
Damghan University, Damghan, P.O.BOX 36715-364, Iran 
(\texttt{abtahi@du.ac.ir; mortaza.abtahi@gmail.com})}
$~$and Sara Farhangi}





\date{\today}

\maketitle

\begin{abstract}
  Let $E$ be a Banach space and $\X$ be the closed unit ball of
  the dual space $E^*$.
  For a compact set $K$ in $E$, we prove that $K$ is polynomially convex
  in $E$ if and only if there exist a unital commutative Banach algebra $A$ and
  a continuous function $f:\X\to A$ such that
  \begin{inparaenum}[(i)]
    \item $A$ is generated by $f(\X)$,
    \item the character space of $A$ is homeomorphic to $K$, and
    \item $K=\vsp(f)$ the joint spectrum of $f$.
  \end{inparaenum}
  In case $E=\c(X)$, where $X$ is a compact Hausdorff space, we will see that
  $\X$ can be replaced by $X$.\footnote{2010 SMC: Primary 46E50, 46J10; Secondary 32E20, 46J20.\par
  Keywords: Polynomials on Banach spaces, Polynomially convexity,
  Commutative Banach algebras, Joint spectrum.}
\end{abstract}

\section{Introduction}
\label{sec:intro}

Let $K$ be a compact set in the $n$-dimensional complex space $\C^n$. It is said that
$K$ is \emph{polynomially convex} if for every $\lambda\in \C^n \setminus K$
there exists a polynomial $p$ such that $p(\lambda) = 1$ and $|p(w)|<1$ for all $w\in K$.
It is known that if $K\subset \C$ then $K$ is polynomially convex if and only if $\C\setminus K$
is connected (e.g. \cite[Theorem 2.3.7]{CBA}). A characterization of polynomially convex sets
in $\C^n$ is given as follows.
Suppose that $A$ is a commutative Banach algebra with unit element $\U$ and
character space $\mis(A)$. For an $n$-tuple $\bfa=(a_1,\dotsc,a_n)$ of elements
of $A$, the \emph{joint spectrum} $\vsp(\bfa)$, defined by \cite[Definition 2.3.2]{CBA}
\begin{equation}\label{eqn:joint-spectrum-finite}
  \vsp(\bfa)=\set{(\phi(a_1),\dotsc,\phi(a_n)):\phi\in\mis(A)},
\end{equation}
is a compact set in $\C^n$. In case $a_1,\dotsc,a_n$ generate $A$,
the space $\mis(A)$ is homeomorphic to $\vsp(\bfa)$ through the homeomorphism $\phi\mapsto (\phi(a_1),\dotsc,\phi(a_n))$.
The following theorem provides a characterization of those compact subsets of $\C^n$
which arise in this way as character spaces of commutative Banach algebras generated by $n$
elements.

\begin{thm}[e.g.\ {\cite[Theorem 2.3.6]{CBA}}]
\label{thm:K-in-Cn-is-pol-convex-iff}
  A compact set $K$ in $\C^n$ is polynomially convex if and only if
  there exists a unital commutative Banach algebra $A$ which is generated
  by $n$ elements $a_1,\dotsc,a_n$ with $K = \vsp(a_1,\dotsc,a_n)$.
\end{thm}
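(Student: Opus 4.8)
The plan is to prove the two implications of Theorem~\ref{thm:K-in-Cn-is-pol-convex-iff} separately, in each case reducing everything to Gelfand theory for a finitely generated commutative Banach algebra and to the elementary fact that a character is completely determined by its values on a generating set. Throughout I write $\wh{K}=\set{\lambda\in\C^n:\abs{p(\lambda)}\le\norm{p}_K\text{ for every polynomial }p}$ for the polynomially convex hull, and I use that, for compact $K$, the condition $\wh K=K$ is equivalent to the separation property in the definition of polynomial convexity: if $\lambda\notin\wh K$ there is a polynomial $p$ with $\abs{p(\lambda)}>\norm{p}_K$, and replacing $p$ by $p/p(\lambda)$ gives $p(\lambda)=1$ and $\norm{p}_K<1$.

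\emph{Necessity.} Suppose $A$ is a unital commutative Banach algebra generated by $a_1,\dotsc,a_n$ with $K=\vsp(\bfa)$. Since a character is determined on the generators, $\phi\mapsto(\phi(a_1),\dotsc,\phi(a_n))$ is a continuous bijection of the compact Hausdorff space $\mis(A)$ onto $K$, hence a homeomorphism; under this identification the Gelfand transform of $p(a_1,\dotsc,a_n)$ becomes the restriction $p|_K$, so $\norm{p}_K=\norm{\wh{p(\bfa)}}_\infty\le\norm{p(\bfa)}$, and in particular $p(\bfa)=0$ in $A$ forces $p|_K=0$. Now take $\lambda\in\wh K$. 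On the dense subalgebra of polynomials in $a_1,\dotsc,a_n$ I would define $\psi_0(p(\bfa))=p(\lambda)$; this is well defined, because $p(\bfa)=q(\bfa)$ yields $(p-q)|_K=0$ and hence $\abs{p(\lambda)-q(\lambda)}\le\norm{p-q}_K=0$, and it is a contractive unital homomorphism since $\abs{\psi_0(p(\bfa))}=\abs{p(\lambda)}\le\norm{p}_K\le\norm{p(\bfa)}$. Extending $\psi_0$ by continuity produces a character $\psi$ of $A$ with $\psi(a_i)=\lambda_i$, whence $\lambda\in\vsp(\bfa)=K$. Thus $\wh K=K$ and $K$ is polynomially convex.

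\emph{Sufficiency.} Conversely, assume $K$ is polynomially convex, so $\wh K=K$. I would take $A=P(K)$, the closure in $\c(K)$ of the algebra of polynomials in the coordinate functions $a_i=z_i|_K$; this is a unital commutative Banach algebra generated by $a_1,\dotsc,a_n$, and it remains only to show $\mis(A)\cong K$ and $\vsp(\bfa)=K$. Each $w\in\wh K$ determines a character $\psi_w$ of $A$: the rule $p|_K\mapsto p(w)$ is well defined (if $p|_K=q|_K$ then $\abs{p(w)-q(w)}\le\norm{p-q}_K=0$) and contractive, hence extends to $P(K)$. Conversely, if $\psi$ is a character of $A$ and $w=(\psi(a_1),\dotsc,\psi(a_n))$, multiplicativity forces $\psi(p|_K)=p(w)$ for every polynomial $p$, so $\abs{p(w)}=\abs{\psi(p|_K)}\le\norm{p}_K$ and $w\in\wh K$. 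These assignments are mutually inverse, and both are continuous for the weak-$*$ topology on $\mis(A)$ and the Euclidean topology on $\wh K\subset\C^n$, so $\mis(A)\cong\wh K=K$; consequently $\vsp(\bfa)=\set{(\psi(a_1),\dotsc,\psi(a_n)):\psi\in\mis(A)}=\wh K=K$.

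\emph{Where the work is.} For $\C^n$ no step is genuinely deep; the argument is essentially a repackaging of Gelfand theory. The point that actually has to be verified is the identification $\mis(P(K))\cong\wh K$ in the sufficiency direction — that the characters of $P(K)$ are exactly the evaluations at points of the polynomially convex hull — together with its mirror image in the necessity direction, namely that an algebraic identity $p(\bfa)=q(\bfa)$ in $A$ propagates, via the Gelfand transform, to honest vanishing of $p-q$ on $K$; this is precisely what makes the point-evaluation functional at a hull point well defined and bounded, and hence extendable to a character of $A$. (The generalization in the body of the paper, where $\C^n$ is replaced by an infinite-dimensional Banach space $E$ and $n$ coordinates by a function $f\colon\X\to A$, is where the real difficulties should arise.)
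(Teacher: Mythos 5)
Your proof is correct. The paper itself gives no proof of this theorem---it is quoted from Kaniuth \cite[Theorem 2.3.6]{CBA}---but your two directions are precisely the finite-dimensional specialization of the machinery the paper develops for its generalizations: the identification $\mis(\pol(K))\cong\hat K$ (Theorem \ref{thm:m(P(K))=hat K}) for your sufficiency half, and, for necessity, the construction of a character at a point $\lambda\in\hat K$ by defining an evaluation functional on the dense subalgebra of polynomials in the generators, checking well-definedness and contractivity via $|p(\lambda)|\le\|p\|_K\le\|p(a_1,\dotsc,a_n)\|$, and extending by continuity, which is exactly the argument in the implication \eqref{item:exist-A-and-f}$\Rightarrow$\eqref{item:K-is-pol-convex-in-C(X)} of Theorem \ref{thm:K-subset-C(X)} together with Lemma \ref{lem:P(f)-is-dense-in-A}.
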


The main purpose of this paper is to establish analogous results for
compact sets in infinite dimensional Banach spaces. Throughout the paper,
the space of all continuous complex valued functions
on a compact Hausdorff space $X$ is denoted by $\c(X)$. For every $f\in\c(X)$,
the uniform norm of $f$ is defined by $\|f\|_X=\sup\set{|f(x)|:x\in X}$.
Given a Banach space $(E,\enorm)$,
the closed unit ball of the topological dual $E^*$ is denoted by $\X$,
which is a compact Hausdorff space for the weak* topology. For every
$x\in E$, the mapping $\tilde x:\X\to\C$, $\phi\mapsto \phi(x)$,
belongs to $\c(\X)$ with $\|\tilde x\|_\X=\|x\|$. Hence, the mapping
$x\mapsto \tilde x$ is an isometric isomorphism of $E$ onto a closed subspace of
$\c(\X)$. For this reason, we may consider $E$ as a closed subspace of $\c(\X)$.

The paper is outlined as follows. In Section \ref{sec:P(K)}, we briefly recall
some properties of polynomially convex sets in a Banach space $E$ and,
for a compact set $K$ in $E$, we study the algebra $\pol(K)$ of all
functions in $\c(K)$ that can be approximated uniformly on $K$ by polynomials.
The main results are included in Section \ref{sec:characterization}.
First, we briefly recall some properties of the joint spectrum $\vsp(f)$ of
an infinite system of elements $f:X\to A$ of a Banach algebra $A$. Next,
we consider the Banach space $\c(X)$ and present a characterization
of polynomially convex sets. Finally, we consider arbitrary Banach spaces
and prove that a compact set $K$ in a Banach space $E$ is polynomially convex
if and only if there exist a unital commutative Banach algebra $A$ and
a continuous function $f:\X\to A$ such that
\begin{inparaenum}[(i)]
  \item $A$ is generated by $f(\X)$,
  \item the character space of $A$ is homeomorphic to $K$, and
  \item $K=\vsp(f)$.
\end{inparaenum}

\section{Algebras of Polynomials on Banach Spaces}
\label{sec:P(K)}

Let $E$ and $F$ be complex Banach spaces. A mapping $P:E\to F$ is called
an \emph{$n$-homogeneous polynomial} if there exists a symmetric continuous
$n$-linear map $A:E^n\to F$ such that $P(x)=A(x,\dotsc,x)$, for every
$x\in E$. Any finite sum of homogeneous polynomials is called
a \emph{polynomial} from $E$ into $F$.

The simplest and, perhaps, the most useful polynomials are obtained by multiplying
linear functionals together. For a finite set $\{\phi_1,\dotsc,\phi_n\}$
of functionals in $E^*$ and a vector $y\in F$, the mapping $P:x\mapsto \phi_1(x)\dotsm \phi_n(x)y$
defines an \emph{$n$-homogeneous polynomial of finite type} from $E$ into $F$.
By the polarization formula \cite[Theorem 1.10]{Mujica-CA-in-BS}, every $n$-homogeneous
polynomial of finite type can be expressed in the form
\begin{equation}
  P(x)= \phi_1^n(x)y_1+\phi_2^n(x)y_2+\dotsb+\phi_m^n(x)y_m,\ x\in E,
\end{equation}
where $\phi_1,\dotsc,\phi_m \in E^*$ and $y_1,\dotsc,y_m \in F$. Any
finite sum of homogeneous polynomials of finite type is called
a \emph{polynomial of finite type}. Since, in this paper, we
consider only polynomials of finite type, we reserve the notation $\pol(E,F)$ for
the space of such polynomials from $E$ into $F$, and
we write $\pol(E)$ for $\pol(E,\C)$. For more information on polynomials
in Banach spaces see, for example, \cite{Dineen-CA-on-IDS},
\cite{Mujica-CA-in-BS}, or \cite{Prolla-1977}.

We \emph{declare} that proofs of some results presented in this section may be
found in the literature. The proofs, however, are included for the reader's convenience.

\begin{defn}
\label{dfn:poly-convex-hull-of-K}
  Let $K$ be a compact set in $E$. Denoted by $\hat K_E$,
  the \emph{polynomially convex hull} of $K$ in $E$ is defined by
  \begin{equation}\label{eqn:poly-convex-hull-of-K}
    \hat K_E = \{x\in E: |P(x)|\leq \|P\|_K,\ P\in \pol(E)\},
  \end{equation}

  \noindent
  where $\enorm_K$ is the uniform norm over $K$.
  The set $K$ is \emph{polynomially convex} if $K=\hat K_E$.
\end{defn}

We will often write $\hat K$ instead of $\hat K_E$ when the underlying
  space $E$ is tacitly understood.
First of all, let us show that polynomially convexity is not altered when passing
from a subspace to the space.

\begin{prop}
\label{prop:KE=KF}
  Let $E$ be a closed subspace of a Banach space $F$, and $K$ be
  a compact set in $E$. Then the polynomially convex hull of $K$
  in $E$ and the polynomially convex hull of $K$ in $F$ coincide;
  that is, $\hat K_E=\hat K_F$.
\end{prop}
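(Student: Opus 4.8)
The plan is to identify $\hat K_E$ with $E\cap\hat K_F$ by comparing finite-type polynomials on $E$ and on $F$ through restriction and Hahn--Banach extension, and then to show separately that the larger hull $\hat K_F$ cannot escape the subspace $E$.

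First I would record two elementary facts about finite-type polynomials. If $Q\in\pol(F)$, then $Q|_E\in\pol(E)$, because restricting a functional in $F^*$ to $E$ yields a functional in $E^*$ and restriction commutes with the algebraic operations defining polynomials of finite type; moreover $\|Q\|_K=\|Q|_E\|_K$ since $K\subseteq E$. Conversely, given $P\in\pol(E)$, write it in the standard form $P=\sum_j\phi_j^{n_j}c_j$ with $\phi_j\in E^*$ and $c_j\in\C$, extend each $\phi_j$ to some $\tilde\phi_j\in F^*$ by the Hahn--Banach theorem, and put $\tilde P=\sum_j\tilde\phi_j^{\,n_j}c_j\in\pol(F)$; then $\tilde P|_E=P$, so the restriction map $\pol(F)\to\pol(E)$ is onto. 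Combining these observations, the defining condition ``$|P(x)|\le\|P\|_K$ for all $P\in\pol(E)$'' is, for $x\in E$, equivalent to ``$|Q(x)|\le\|Q\|_K$ for all $Q\in\pol(F)$'', which gives
\[
  \hat K_E=\bigset{x\in E:\ |Q(x)|\le\|Q\|_K \text{ for all } Q\in\pol(F)}=E\cap\hat K_F.
\]

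It then remains to prove $\hat K_F\subseteq E$. Here I would use that $E$ is a \emph{closed} subspace of $F$: if $x\in F\setminus E$, then $x+E\neq 0$ in the Banach space $F/E$, so there is $\phi\in F^*$ with $\phi|_E\equiv 0$ and $\phi(x)=1$. Regarding $\phi$ as a $1$-homogeneous polynomial of finite type in $\pol(F)$, we have $\|\phi\|_K=0$ because $K\subseteq E$, whereas $|\phi(x)|=1$; hence $x\notin\hat K_F$. This shows $\hat K_F\subseteq E$, and together with the previous paragraph $\hat K_F=E\cap\hat K_F=\hat K_E$, as desired.

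I do not expect a genuine obstacle here: every step is a routine use of the Hahn--Banach theorem—first in its extension form, to lift polynomials from $E$ to $F$, then in its separation (quotient) form, to produce a functional vanishing on $E$ but not at a point outside $E$—combined with the fact that finite-type polynomials are assembled from linear functionals and so behave well under both restriction and extension. The only point needing a little care is the last inclusion $\hat K_F\subseteq E$, where the closedness of $E$ is indispensable; without it the separating functional need not exist.
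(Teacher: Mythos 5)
Your proof is correct and follows essentially the same route as the paper: restriction of finite-type polynomials gives one inclusion, Hahn--Banach extension of the constituent functionals gives the other, and a functional vanishing on the closed subspace $E$ but not at a point outside it (again Hahn--Banach) forces $\hat K_F\subseteq E$. The only difference is organizational (you phrase it as $\hat K_E=E\cap\hat K_F$ plus $\hat K_F\subseteq E$, and you spell out the extension of polynomials via their standard form, which the paper leaves implicit).
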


\begin{proof}
  Since the restriction to $E$ of every polynomial in $\pol(F)$ is a polynomial
  in $\pol(E)$, we have $\hat K_E\subset \hat K_F$. To prove the reverse inclusion,
  take an element $x\in \hat  K_F$. First, we show that $x\in E$. If $\phi\in F^*$
  and $\phi|_E=0$ then $\|\phi\|_K=0$. Regarding $\phi$ as a polynomial in $\pol(F)$,
  we get $\phi(x)=0$. Since $E$ is closed in $F$,
  the Hahn-Banach Theorem implies that $x\in E$. Another use of the Hahn-Banach
  Theorem shows that every polynomial $P\in \pol(E)$ extends to a polynomial
  $\bar P\in \pol(F)$. Clearly, $\|P\|_K=\|\bar P\|_K$ and thus,
  for $x\in \hat K_F$,
  \[
    |P(x)| = |\bar P(x)| \leq \|\bar P\|_K = \|P\|_K.
  \]
  Since $x\in E$, we obtain $x\in \hat K_E$. Therefore, $\hat K_F\subset \hat K_E$.
\end{proof}

We remark that, by \cite[Corollary 2.4]{Mujica-Oka-Weil}, if $K$ is compact
then $\hat K$ is also compact.

\begin{defn}
  Let $K$ be a compact set in a Banach space $E$. We define $\pol_0(K)$ to be
  the space of the restriction to $K$ of all polynomials $P\in\pol(E)$. We define
  $\pol(K)$ as the closure of $\pol_0(K)$ in $\c(K)$.
\end{defn}

Clearly, $\pol(K)$ is a uniform algebra on $K$.

\begin{prop}\label{prop:P(K)=P(hat K)}
  The algebra $\pol(K)$ is isometrically isomorphic to $\pol(\hat K)$.
\end{prop}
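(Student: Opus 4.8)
The plan is to exhibit the natural restriction map $\rho\colon\pol(\hat K)\to\pol(K)$ and show it is an isometric isomorphism. The map is well-defined on the dense subalgebra: every $P\in\pol(E)$ restricts to an element of $\pol_0(K)\subset\pol(K)$, and we send the class of $P|_{\hat K}$ to the class of $P|_K$. The entire content of the proposition is that this map is isometric, i.e.\ that $\|P\|_{\hat K}=\|P\|_K$ for every $P\in\pol(E)$; once that is known, $\rho$ extends to an isometry of the completions, it is surjective because its image contains the dense subalgebra $\pol_0(K)$, and it is multiplicative since it is so on polynomials.

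So the heart of the matter is the identity $\|P\|_{\hat K}=\|P\|_K$. The inequality $\|P\|_K\le\|P\|_{\hat K}$ is trivial since $K\subset\hat K$. For the reverse, I would argue directly from the definition of the polynomially convex hull. Fix $P\in\pol(E)$ and let $x\in\hat K$. If $P$ vanishes on $K$, I would like to conclude $P(x)=0$; in general, applying the defining inequality in Definition~\ref{dfn:poly-convex-hull-of-K} to the polynomial $P$ itself gives $|P(x)|\le\|P\|_K$, and taking the supremum over $x\in\hat K$ yields $\|P\|_{\hat K}\le\|P\|_K$. Thus in fact the identity is immediate from the very definition of $\hat K$, and the only subtlety is bookkeeping about which polynomials are allowed.

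The one genuine point to be careful about is the following: Definition~\ref{dfn:poly-convex-hull-of-K} controls $|P(x)|$ for $P\in\pol(E)$, but $\pol(\hat K)$ is the closure of restrictions of such polynomials, so a general element $g\in\pol(\hat K)$ is only a uniform limit $g=\lim P_n|_{\hat K}$. One checks that $\rho$ is well-defined on the completion by observing that if $P_n|_{\hat K}\to 0$ uniformly on $\hat K$ then a fortiori $P_n|_K\to 0$ uniformly on $K$; conversely the isometry property just established shows that $P_n|_K$ is Cauchy in $\c(K)$ exactly when $P_n|_{\hat K}$ is Cauchy in $\c(\hat K)$, so $\rho$ is a bijective correspondence of Cauchy sequences modulo null sequences. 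Hence $\rho$ is a well-defined isometric algebra isomorphism of $\pol(\hat K)$ onto $\pol(K)$.

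I expect no real obstacle here: the proposition is essentially a reformulation of the definition of $\hat K$, and the main thing to get right is the order of quantifiers when passing from polynomials to their uniform limits. One should also note in passing that $\hat K$ is compact (as remarked after Proposition~\ref{prop:KE=KF}), so that $\c(\hat K)$ and $\pol(\hat K)$ are genuine uniform algebras and the completions make sense.
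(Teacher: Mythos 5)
Your proof is correct, and it is precisely the argument the paper has in mind when it dismisses the proposition as ``Clear'': the definition of $\hat K$ gives $\|P\|_{\hat K}=\|P\|_K$ for every $P\in\pol(E)$, so the restriction map is isometric on the dense subalgebra $\pol_0(\hat K)$ and extends to an isometric isomorphism of the closures. Your extra care about passing to uniform limits and the compactness of $\hat K$ is exactly the bookkeeping the paper suppresses.
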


\begin{proof} Clear.
\end{proof}

Next, we identify the character space of $\pol(K)$ as the polynomially convex hull
$\hat K$. Before that, the following lemma is in order.

\begin{lem}\label{lem:w*-conv-uni-conv}
  Let $\{\psi_\alpha:\alpha\in I\}$ be a bounded net in $E^*$ that
  converges, in the weak* topology, to some $\psi\in E^*$.
  Then $\psi_\alpha \to \psi$, uniformly on compact sets in $E$.
\end{lem}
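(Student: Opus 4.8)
The plan is to run a standard $\e/3$-approximation argument, exploiting compactness of $K$ to reduce weak* convergence (a pointwise statement) to uniform convergence.

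First I would fix a compact set $K\subset E$ and set $M=\sup_\alpha\norm{\psi_\alpha}$, which is finite by hypothesis; passing to the limit in $\abs{\psi_\alpha(x)}\le M\norm{x}$ shows $\norm{\psi}\le M$ as well. Fix $\e>0$. By compactness, choose finitely many points $x_1,\dotsc,x_k\in K$ such that the balls $B(x_j,\e/3M)$ cover $K$. Since $\psi_\alpha\to\psi$ in the weak* topology, for each $j$ the scalar net $\psi_\alpha(x_j)$ converges to $\psi(x_j)$, so there is an index $\alpha_0\in I$ (using that $I$ is directed and there are only finitely many $j$) with $\abs{\psi_\alpha(x_j)-\psi(x_j)}<\e/3$ for all $\alpha\ge\alpha_0$ and all $j=1,\dotsc,k$.

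Then for arbitrary $x\in K$, picking $j$ with $\norm{x-x_j}<\e/3M$, the triangle inequality gives, for $\alpha\ge\alpha_0$,
\[
  \abs{\psi_\alpha(x)-\psi(x)}
  \le \abs{\psi_\alpha(x-x_j)} + \abs{\psi_\alpha(x_j)-\psi(x_j)} + \abs{\psi(x_j-x)}
  < M\cdot\frac{\e}{3M} + \frac{\e}{3} + M\cdot\frac{\e}{3M} = \e,
\]
so $\sup_{x\in K}\abs{\psi_\alpha(x)-\psi(x)}\le\e$ for all $\alpha\ge\alpha_0$. As $\e>0$ was arbitrary, this is exactly uniform convergence of $\psi_\alpha$ to $\psi$ on $K$.

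There is no real obstacle here: the only points requiring a word of care are that the weak* limit $\psi$ is itself bounded (needed to control the term $\abs{\psi(x_j-x)}$), and that finitely many convergent nets indexed by the same directed set admit a common tail index $\alpha_0$. Both are routine, so the proof is short.
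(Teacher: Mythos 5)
Your proof is correct and follows essentially the same argument as the paper: cover the compact set $K$ by finitely many balls, use weak* convergence at the finitely many centers to get a common tail index, and conclude by the triangle inequality with the uniform bound $M$ (the paper writes the bound as $(2M+1)\e$ rather than rescaling to $\e/3$, but this is only cosmetic).
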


\begin{proof}
  Assume that $\{\psi_\alpha\}$ is bounded by $M$ and that
  $K$ is a compact set in $E$. Given $\e>0$,
  there exist  $x_1,\ldots, x_n \in K$ such that $K\subset \bigcup_{i=1}^n B(x_i,\e)$,
  where $B(x,\e)$ is the open ball with center $x$ and radius $\e$.
  Set
  \[
    U_0=\{\psi \in E^* : |\psi(x_i)-\psi_0(x_i)|<\e, i=1,\dotsc,n\}.
  \]

  Then $U_0$ is a neighbourhood of $\psi_0$ in the weak* topology of $E^*$.
  There is $\alpha_0\in I$ such that $\psi_\alpha \in U_0$, for $\alpha\geq \alpha_0$.
  For every $x\in K$, there exists $i\in \{1,\ldots,n\}$ such that $\|x- x_i\|<\e$.
  Now, for $\alpha\geq \alpha_0$,
  \begin{align*}
     |\psi_\alpha(x)-\psi_0(x)|
       & \leq |\psi_\alpha(x)- \psi_\alpha(x_i)|+|\psi_\alpha(x_i)- \psi_0(x_i)|
         +|\psi_0(x_i)-\psi_0(x)| \\
       & \leq M\e +\e+\|\psi_0\|\e \leq (2M+1)\e.
  \end{align*}

  Since $x\in K$ is arbitrary, we get $\|\psi_\alpha-\psi_0\|_K \leq (2M+1)\e$,
  for $\alpha\geq \alpha_0$. This shows that $\psi_\alpha\to \psi_0$ uniformly on $K$.
\end{proof}

\begin{thm}\label{thm:m(P(K))=hat K}
 The character space of $\pol(K)$ is homeomorphic to $\hat K$.
\end{thm}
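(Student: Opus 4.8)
The plan is to exhibit the standard homeomorphism between $\mis(\pol(K))$ and $\hat K$ and verify it is well-defined and bicontinuous. First I would define the map $\Phi:\hat K\to\mis(\pol(K))$ by sending $x\in\hat K$ to the evaluation functional $\e_x:P\mapsto P(x)$ on $\pol_0(K)$. This is well-defined on the dense subalgebra $\pol_0(K)$, and the defining inequality of $\hat K$, namely $|P(x)|\leq\|P\|_K$ for $P\in\pol(E)$, shows exactly that $\e_x$ is bounded (of norm $\leq 1$) there, hence extends uniquely to a character of $\pol(K)$. Since constants lie in $\pol(K)$ and $\e_x(\U)=1$, this is a genuine (nonzero) character.

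Next I would show $\Phi$ is injective and continuous. Injectivity: if $\e_x=\e_y$ for $x,y\in\hat K$, then $\phi(x)=\phi(y)$ for every $\phi\in E^*$ (each $\phi$ restricts to an element of $\pol_0(K)$), so $x=y$ by Hahn--Banach. Continuity: if $x_\alpha\to x$ in $\hat K$ (norm topology, which on a compact set agrees with the relevant weak topologies), then $P(x_\alpha)\to P(x)$ for every $P\in\pol_0(K)$ by continuity of polynomials, hence $\e_{x_\alpha}\to\e_x$ in the Gelfand (weak*) topology of $\mis(\pol(K))$ — this uses that $\pol_0(K)$ is dense and the net of characters is uniformly bounded by $1$. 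Since $\hat K$ is compact (noted after Proposition~\ref{prop:KE=KF}) and $\mis(\pol(K))$ is Hausdorff, $\Phi$ will be a homeomorphism onto its image once surjectivity is established.

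The main work is surjectivity: given a character $\chi\in\mis(\pol(K))$, produce $x\in\hat K$ with $\chi=\e_x$. Restricting $\chi$ to the linear functionals, the map $\phi\mapsto\chi(\phi|_K)$ is a bounded linear functional on $E$, i.e.\ an element of $E^{**}$; one must check it actually comes from a point $x\in E$. Here is where Lemma~\ref{lem:w*-conv-uni-conv} enters: I would realize $E$ as a closed subspace of $\c(\X)$ via $x\mapsto\tilde x$ and argue that $\chi$, being a character of the polynomial algebra, respects the weak* topology on bounded sets of $E^*$ through the lemma, forcing the functional $\phi\mapsto\chi(\phi)$ to be weak*-continuous on bounded sets and hence weak*-continuous globally (by the Krein--Smulian / Banach--Dieudonné theorem), so it is evaluation at some $x\in E$. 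Then $\chi(P)=P(x)$ first for $P$ linear, then for products of linear functionals, then by linearity for all $P\in\pol_0(K)$, and by density for all of $\pol(K)$. Finally $|P(x)|=|\chi(P)|\leq\|P\|_K$ for all $P\in\pol(E)$ gives $x\in\hat K$.

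I expect the delicate point to be this last step, namely proving the functional associated to $\chi$ is realized by a point of $E$ rather than merely by an element of $E^{**}$; the compactness of $\hat K$ and Lemma~\ref{lem:w*-conv-uni-conv} are precisely the tools that pin it down, and everything else (well-definedness, injectivity, continuity, and the identification $\chi=\e_x$ on the dense subalgebra) is routine.
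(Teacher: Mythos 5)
Your proposal is correct and follows essentially the same route as the paper: the crux in both is surjectivity, obtained by restricting a character to $E^*\subset\pol(E)$, using Lemma~\ref{lem:w*-conv-uni-conv} to get weak* continuity on bounded sets, invoking the Banach/Krein--Smulian theorem to upgrade this to global weak* continuity so the functional is evaluation at some $x\in E$, and then concluding $x\in\hat K$ from $|P(x)|=|\chi(P)|\le\|P\|_K$. The only cosmetic differences are that the paper first reduces to the case $K=\hat K$ via Proposition~\ref{prop:P(K)=P(hat K)} and quotes the standard fact that the evaluation map embeds $K$ homeomorphically, whereas you verify injectivity and continuity of the evaluation map on $\hat K$ directly (and your ``bounded linear functional on $E$'' should read ``on $E^*$'', as your own parenthetical ``element of $E^{**}$'' indicates).
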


\begin{proof}
  Using Proposition \ref{prop:P(K)=P(hat K)}, we assume that
  $K$ is polynomially convex. Define a mapping $J:K \to \mis(\pol(K))$ by
  $J(x)=\delta_x$, where $\delta_x:f\mapsto f(x)$ is the evaluation homomorphism
  of $\pol(K)$. The mapping $J$ imbeds $K$ homeomorphically as a compact subset of
  $\mis(\pol(K))$; see \cite[Chapter 4]{Dales}. We just need to show that $J$
  is surjective. Take  a character $\phi$ of $\pol(K)$. Since $E^*\subset \pol(E)$,
  we may consider $\phi$ as a linear functional on $E^*$. We show that $\phi$ is weak*
  continuous on $E^*$. Using a result of Banach (e.g.~\cite[Corollary 4, p 250]{Horvath}),
  it suffices to show that $\phi$ is weak* continuous on bounded subsets of $E^*$.
  Suppose that $\{\psi_\alpha\}$ is a bounded net in $E^*$ converging to $\psi_0$
  in the weak* topology. By Lemma \ref{lem:w*-conv-uni-conv},
  $\psi_\alpha\to \psi_0$ uniformly on $K$ which, in turn, implies that
  $\phi(\psi_\alpha)\to \phi(\psi_0)$. Hence, $\phi$ is a weak* continuous functional
  on $E^*$. By \cite[Theorem 3.10]{Rudin-FA}, there is $x\in E$ such that
  $\phi(\psi)=\psi(x)$ for all $\psi\in E^*$.
  Therefore, $\phi(P)=P(x)$ for every polynomial $P\in \pol(E)$.
  Moreover, since $\phi$ is a character of $\pol(K)$, we have $|P(x)|=|\phi(P)| \leq \|P\|_K$,
  for every polynomial $P\in \pol(E)$. This shows that $x\in \hat K=K$.
  Since $\phi=\delta_x$ on $\pol(E)$, a dense subset of $\pol(K)$,
  we get $\phi=\delta_x$ on $\pol(K)$.
\end{proof}

\section{A Characterization of Polynomially Convex Sets}
\label{sec:characterization}

In this section, we present the main results of the paper
characterizing polynomially convex compact sets in Banach spaces.
To start we briefly recall some properties of the joint spectrum of an
infinite system of elements of a Banach algebra.

Suppose that $A$ is a unital commutative Banach algebra, that $X$ is a nonempty set,
and that $f:X\to A$ is a function. The \emph{joint spectrum} of $f$ is defined to be the set of
all functions $\lambda:X\to \C$ such that the ideal of $A$ generated by
$\set{\lambda(x)\U-f(x):x\in X}$ is proper. To distinguish between the usual spectrum
and the joint spectrum, the former is denoted by $\Sp(f)$ and
the latter is denoted by $\vsp(f)$.
It is proved (e.g.\ see \cite{ab-fa}) that
\begin{equation}\label{eqn:sp(f)=phi o f}
  \vsp(f)=\set{\phi\circ f : \phi \in \mis(A)}.
\end{equation}

It is worth mentioning that when $X$ is enriched with some structure (topological, algebraical, etc.),
and $f:X\to A$ is an appropriate morphism, many structural properties of $f$ are inherited
by every $\lambda\in \vsp(f)$. For more on joint spectrum, see
\cite{Dineen-Harte-Taylor-I} and \cite{Harte-gmj}, for example.


\begin{prop}
\label{prop:if-A=<f(X)>-then-mis(A)=SP(f)}
  Suppose that $A$ is a unital commutative Banach algebra, that $X$ is
  a compact Hausdorff space, and that $f:X\to A$ is a continuous function.
  If $f(X)$ generates $A$, then the mapping $\Phi: \mis(A) \to \vsp(f)$,
  $\phi \mapsto \phi \circ f$, is a homeomorphism.
\end{prop}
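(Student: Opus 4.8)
The plan is to verify, in sequence, that $\Phi$ is well defined with image exactly $\vsp(f)$, that it is injective, that it is continuous, and that its inverse is continuous; the last point I would obtain for free from a compactness argument. Throughout, $\vsp(f)$ is regarded as a subset of $\C^X$ with the product topology — equivalently, since $f$ is continuous on the compact space $X$, as a subset of $\c(X)$ carrying the topology of pointwise convergence. (One may note in passing that, because every character of a unital Banach algebra has norm at most $1$ and $f$ is uniformly continuous on $X$, the family $\set{\phi\circ f:\phi\in\mis(A)}$ is equicontinuous, so by Arzel\`a--Ascoli this topology coincides with the uniform one on $\vsp(f)$.)

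For the first point, identity \eqref{eqn:sp(f)=phi o f} says precisely that $\vsp(f)=\set{\phi\circ f:\phi\in\mis(A)}$, so $\Phi$ maps $\mis(A)$ onto $\vsp(f)$. For injectivity, suppose $\phi_1\circ f=\phi_2\circ f$ for two characters $\phi_1,\phi_2$; then $\phi_1$ and $\phi_2$ agree on $f(X)$, hence, being algebra homomorphisms, on the unital subalgebra generated by $f(X)$, and hence, being automatically continuous, on its closure, which is all of $A$ by hypothesis. For continuity of $\Phi$, recall that the Gelfand topology on $\mis(A)$ is the topology of pointwise convergence on $A$; so if $\phi_\alpha\to\phi$ there, then $\phi_\alpha(f(x))\to\phi(f(x))$ for each $x\in X$, i.e.\ $\Phi(\phi_\alpha)\to\Phi(\phi)$ pointwise on $X$.

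Finally, since $A$ is unital, $\mis(A)$ is compact in the Gelfand topology, while $\vsp(f)\subset\C^X$ is Hausdorff, and a continuous bijection from a compact space onto a Hausdorff space is a homeomorphism. I do not expect a serious obstacle here; the one step that needs a moment's care is injectivity, where it is essential to read ``$f(X)$ generates $A$'' as ``the closed unital subalgebra generated by $f(X)$ equals $A$'' and to invoke the automatic continuity of characters to pass from agreement on $f(X)$ to agreement on all of $A$. (If one preferred to avoid the compactness appeal, continuity of $\Phi^{-1}$ could be proved directly by the same density device: given $a\in A$ and $\e>0$, approximate $a$ within $\e$ by an element $b$ of the generating subalgebra and use $\norm{\phi_\alpha}\le 1$ in a three-$\e$ estimate to deduce $\phi_\alpha(a)\to\phi(a)$ from $\phi_\alpha\circ f\to\phi\circ f$.)
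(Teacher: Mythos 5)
Your proof is correct, and its skeleton --- surjectivity from \eqref{eqn:sp(f)=phi o f}, injectivity from the fact that two characters agreeing on $f(X)$ agree on all of $A$, and the compact-to-Hausdorff principle to get continuity of the inverse for free --- is exactly the paper's. The one place you genuinely diverge is how continuity of $\Phi$ is obtained for the topology the paper intends on $\vsp(f)$, namely the uniform (norm) topology of $\c(X)$: the paper gets it in one step by citing an external result (Proposition 3.5 of the first reference) that $\phi\mapsto\phi\circ f$ is weak*-to-norm continuous on bounded subsets of $A^*$ --- in substance Lemma \ref{lem:w*-conv-uni-conv} applied to the compact set $f(X)\subset A$ --- whereas you first note that $\Phi$ is trivially continuous into the topology of pointwise convergence and then upgrade the topology on the image, using that $\set{\phi\circ f:\phi\in\mis(A)}$ is equicontinuous (since $\norm{\phi}\le 1$ and $f$ is continuous) so that pointwise and uniform convergence coincide there. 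Both routes rest on the same two ingredients (boundedness of $\mis(A)$ and compactness of $f(X)$); yours is self-contained and avoids the citation, while the paper's yields norm-continuity on all bounded sets directly, without passing through the image. Your extra care on injectivity --- reading ``$f(X)$ generates $A$'' as the closed unital subalgebra generated by $f(X)$ being $A$, and invoking continuity of characters to pass from the generated subalgebra to its closure --- is precisely what the paper's terser ``since $f(X)$ generates $A$, we get $\phi_1=\phi_2$'' tacitly uses.
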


\begin{proof}
  Since $f:X\to A$ is continuous, the joint spectrum $\vsp(f)$
  is a compact set in $\c(X)$; see e.g.\ \cite[Theorem 3.3]{ab-fa}.
  We just need to prove that $\Phi$ is a continuous bijection.
  Obviously $\Phi$ is surjective. We show that $\Phi$ is injective.
  Assume that $\phi_1\circ f=\phi_2\circ f$, for $\phi_1,\phi_2\in \mis(A)$.
  Then $\phi_1=\phi_2$ on the range $f(X)$.
  Since $f(X)$ generates $A$, we get $\phi_1=\phi_2$ and thus $\Phi$ is injective.

  The continuity of $\Phi$ follows from \cite[Proposition 3.5]{Abtahi-vector-valued}.
  It is proved that, with respect to the weak* topology of $A^*$, the mapping
  $A^*\to \c(X)$, $\phi\mapsto\phi\circ f$, is continuous on bounded subsets of $A^*$.
  Since $\mis(A)$ is bounded, we conclude that $\Phi$ is continuous.
\end{proof}


Before presenting our main results, the following lemma is in order.

\begin{lem}
\label{lem:P(f)-is-dense-in-A}
  Let $A$ be a unital commutative Banach algebra, $X$ be a compact Hausdorff space,
  and $f:X\to A$ be a continuous function.
  \begin{enumerate}[\upshape(i)]
    \item \label{item:Pf-is-defined}
    With each polynomial $P\in\pol(\c(X))$ is associated an element $Pf$ of $A$
    such that $\phi(Pf)=P(\phi\circ f)$, for all $\phi\in \mis(A)$.
    \item \label{item:Pf=<f(X)>}
    The set $\set{Pf:P\in\pol(\c(X))}$ is an algebra by itself and
    is dense in the subalgebra of $A$ generated by $f(X)$.
  \end{enumerate}

\end{lem}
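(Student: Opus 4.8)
The plan is to construct $Pf$ first for linear $P$ and then extend multiplicatively. Fix $\psi\in\c(X)^*$ and let $\mu$ be the regular complex Borel measure on $X$ representing $\psi$ via the Riesz theorem, so that $\psi(g)=\int_X g\,d\mu$ for $g\in\c(X)$. Since $f\colon X\to A$ is continuous and $X$ is compact, $f(X)$ is a compact subset of $A$, so its closed absolutely convex hull is compact; the elementary theory of the weak (Pettis) integral of a bounded continuous Banach-space-valued function then shows that $a_\psi:=\int_X f\,d\mu$ is a well-defined element of $A$, lying in the closed linear span of $f(X)$, and satisfying $x^*(a_\psi)=\int_X x^*(f(x))\,d\mu(x)=\psi(x^*\circ f)$ for every $x^*\in A^*$ (the last identity being essentially the definition of the integral, since $x^*\circ f\in\c(X)$). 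In particular, taking $\psi=\delta_x$, the evaluation at a point $x\in X$, gives $a_{\delta_x}=f(x)$.

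Next I would extend $\psi\mapsto a_\psi$ to all of $\pol(\c(X))$. The key observation is that a finite $\C$-linear combination $\sum_k c_k\,\psi_{k,1}\cdots\psi_{k,n_k}$ of products of functionals that vanishes identically on $\c(X)$ also gives $\sum_k c_k\,a_{\psi_{k,1}}\cdots a_{\psi_{k,n_k}}=0$ in $A$: expressing all the $\psi_{k,j}$ through a basis $\psi_1,\dots,\psi_N$ of their (finite-dimensional) linear span and using that $g\mapsto(\psi_1(g),\dots,\psi_N(g))$ maps $\c(X)$ onto $\C^N$ (the $\psi_i$ being linearly independent), the associated polynomial in $N$ variables vanishes on all of $\C^N$, hence is the zero polynomial, and the same formal expansion — valid in $A$ because the $a_{\psi_i}$ commute — then yields $0$. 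Consequently one may unambiguously set $Pf:=\sum_k c_k\,a_{\psi_{k,1}}\cdots a_{\psi_{k,n_k}}$ whenever $P=\sum_k c_k\,\psi_{k,1}\cdots\psi_{k,n_k}$; equivalently, $\pol(\c(X))$ is the free unital commutative algebra on the vector space $\c(X)^*$, and $P\mapsto Pf$ is the unique unital homomorphism $\pol(\c(X))\to A$ extending $\psi\mapsto a_\psi$. For $\phi\in\mis(A)$, both $P\mapsto\phi(Pf)$ and $P\mapsto P(\phi\circ f)$ are unital algebra homomorphisms $\pol(\c(X))\to\C$ that agree on the generators, since $\phi(a_\psi)=\psi(\phi\circ f)$; hence they coincide, which is exactly part (i).

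For part (ii), the set $\{Pf:P\in\pol(\c(X))\}$ is the homomorphic image of a unital algebra, hence a unital subalgebra of $A$. Because $f(x)=a_{\delta_x}=(\delta_x)f$, it contains $f(X)$, and therefore contains the unital subalgebra $B_0$ generated by $f(X)$. Conversely, each $a_\psi$ lies in the closed linear span of $f(X)$, hence in the closed subalgebra $B=\overline{B_0}$ generated by $f(X)$; since $B$ is an algebra, every $Pf$ — a polynomial expression in finitely many $a_\psi$ — also lies in $B$. Thus $B_0\subseteq\{Pf:P\in\pol(\c(X))\}\subseteq B=\overline{B_0}$, so $\{Pf:P\in\pol(\c(X))\}$ has closure $B$ and is dense in the subalgebra of $A$ generated by $f(X)$.

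The only genuinely technical point is the very first step: that the weak integral $\int_X f\,d\mu$ is an element of $A$ (and not merely of $A^{**}$) lying in the closed span of $f(X)$. This rests on Mazur's theorem that the closed convex hull of a compact set in a Banach space is compact, together with the bipolar theorem (or, concretely, on approximating $\int_X f\,d\mu$ by Riemann-type sums $\sum_i\mu(E_i)f(x_i)$ using the uniform continuity of $f$). Everything after that — the extension of $\psi\mapsto a_\psi$ to $\pol(\c(X))$ via the formal polynomial identities, and the two-sided inclusion for the density claim — is routine.
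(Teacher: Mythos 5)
Your proof is correct and follows essentially the same route as the paper: represent each $\psi\in\c(X)^*$ by a measure via the Riesz theorem, define $\psi f$ as the vector-valued integral $\int_X f\,d\mu$ (the paper cites Rudin's Theorem 3.27 where you invoke Mazur/Riemann sums), extend to $\pol(\c(X))$, and get density from $f(x)=\delta_x f$. The only difference is that you spell out the well-definedness of $P\mapsto Pf$ (via linear independence and joint surjectivity of functionals) and the inclusion of $\set{Pf}$ in the closed subalgebra generated by $f(X)$, points the paper passes over with ``defined with no ambiguity'' and a without-loss-of-generality reduction.
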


\begin{proof}
  \eqref{item:Pf-is-defined}
  We start with the simplest polynomials in $\pol(\c(X))$,
  that is functionals $\psi\in \c(X)^*$. By the Riesz Representation Theorem,
  there exists a regular complex Borel measure $\mu$ such that
  \[
    \psi(g) = \int_X g d\mu, \quad (g\in \c(X)).
  \]

  By \cite[Theorem 3.27]{Rudin-FA}, the vector-valued integral
  $\int fd\mu$ is defined as an element of $A$ such that
  $\phi(\int fd\mu)=\int\phi\circ f d\mu$, for all $\phi\in A^*$.
  Define $\psi f=\int fd\mu$ and $\psi^mf=(\psi f)^m$, for $m\geq0$.
  Since each polynomial $P\in\pol(\c(X))$ is a linear combination of
  $\set{\psi^m:\psi\in \c(X)^*, m\geq 0}$, we see that $Pf$,
  as an element of $A$, is defined with no ambiguity.
  Moreover, $\phi(Pf)=P(\phi\circ f)$, for all $\phi\in\mis(A)$.
  Note, however, that the latter equality may fail to hold for
  $\phi\in A^*$.

  \eqref{item:Pf=<f(X)>} Without loss of generality, assume that $A$ is generated by $f(X)$.
  Since $\pol(\c(X))$ is an algebra, the set $A_0=\set{Pf:P\in\pol(\c(X))}$ is an algebra.
  Since $f(x)=\delta_x f$ and $\delta_x\in \pol(\c(X))$, for every $x\in X$,
  we get $f(X) \subset A_0$. Therefore, $A_0$ is dense in $A$.
\end{proof}

Extending Theorem \ref{thm:K-in-Cn-is-pol-convex-iff}, we now give a
characterization of polynomially convex sets in $\c(X)$, where $X$ is
a compact Hausdorff space.

\begin{thm}\label{thm:K-subset-C(X)}
  For a compact set $K$ in $\c(X)$, the following statements are equivalent.
  \begin{enumerate}[\upshape(i)]
    \item \label{item:K-is-pol-convex-in-C(X)}
    The set $K$ is polynomially convex.

    \item \label{item:exist-A-and-f}
    There is a unital commutative Banach algebra $A$ and a continuous function $f:X\to A$
    such that $f(X)$ generates $A$ and $K=\vsp(f)$.
  \end{enumerate}
\end{thm}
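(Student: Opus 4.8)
The plan is to prove the two implications separately, using the machinery already assembled. For the direction \eqref{item:exist-A-and-f}$\Rightarrow$\eqref{item:K-is-pol-convex-in-C(X)}, suppose we are given a unital commutative Banach algebra $A$ generated by $f(X)$, with $K=\vsp(f)=\set{\phi\circ f:\phi\in\mis(A)}$. I would show $\hat K\subset K$ by taking $g\in\hat K\subset\c(X)$ and manufacturing a character of $A$ that maps to $g$. Concretely, every polynomial $P\in\pol(\c(X))$ gives, by Lemma \ref{lem:P(f)-is-dense-in-A}\eqref{item:Pf-is-defined}, an element $Pf\in A$ with $\phi(Pf)=P(\phi\circ f)$ for all $\phi\in\mis(A)$; hence $\norm{Pf}_A = \sup_{\phi\in\mis(A)}\abs{\phi(Pf)} = \sup_{\lambda\in\vsp(f)}\abs{P(\lambda)} = \norm{P}_K$ after identifying $\norm{\,\cdot\,}_A$ on the semisimple closure with the sup over $\mis(A)$ — or, to avoid the semisimplicity fuss, I would instead argue directly: define $\phi_0$ on the dense subalgebra $A_0=\set{Pf:P\in\pol(\c(X))}$ of $A$ (Lemma \ref{lem:P(f)-is-dense-in-A}\eqref{item:Pf=<f(X)>}) by $\phi_0(Pf)=P(g)$. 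This is well defined because $Pf=Qf$ implies $P-Q$ vanishes on $\vsp(f)=K$, so $\norm{(P-Q)}_K=0$, and since $g\in\hat K$ we get $\abs{(P-Q)(g)}\le\norm{(P-Q)}_K=0$. The same estimate $\abs{P(g)}\le\norm{P}_K=\norm{Pf}_A$ shows $\phi_0$ is bounded on $A_0$, hence extends to a character $\phi$ of $A$, and then $\phi\circ f(x)=\phi(\delta_xf)=\delta_x(g)=g(x)$, so $g=\phi\circ f\in K$.

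For the converse \eqref{item:K-is-pol-convex-in-C(X)}$\Rightarrow$\eqref{item:exist-A-and-f}, suppose $K$ is polynomially convex. The natural candidate is $A=\pol(K)$ together with the evaluation map $f:X\to\pol(K)$, $f(x)=\hat x$, where $\hat x$ is the coordinate function $g\mapsto g(x)$ restricted to $K$; note $\hat x\in\pol_0(K)\subset\pol(K)$ because the point-evaluation $\delta_x\in\c(X)^*$ is a (degree-one) polynomial on $\c(X)$. I would then verify: (a) $f$ is continuous $X\to\pol(K)$ — this follows because for $x,y\in X$ one has $\norm{\hat x-\hat y}_K=\sup_{g\in K}\abs{g(x)-g(y)}$, and the relative compactness of $K$ in $\c(X)$ forces equicontinuity (Arzelà–Ascoli), giving continuity of $x\mapsto\hat x$; (b) $f(X)$ generates $\pol(K)$ — indeed the subalgebra generated by $\set{\hat x:x\in X}$ contains all restrictions to $K$ of polynomials of finite type on $\c(X)$, because such a polynomial is a polynomial expression in finitely many functionals $\psi_j\in\c(X)^*$, and each $\psi_j|_K$ is a uniform limit on $K$ (using that $K$ is bounded) of linear combinations of the $\hat x$'s via the Riesz representation — more carefully, one should argue that $\pol_0(K)$ lies in the closed algebra generated by $f(X)$, and since $\pol_0(K)$ is dense in $\pol(K)$ by definition, we are done; (c) $\vsp(f)=K$ — by \eqref{eqn:sp(f)=phi o f}, $\vsp(f)=\set{\phi\circ f:\phi\in\mis(\pol(K))}$, and Theorem \ref{thm:m(P(K))=hat K} identifies $\mis(\pol(K))$ with $\hat K=K$ via $\phi\leftrightarrow\delta_g$ for $g\in K$, whence $\phi\circ f(x)=\delta_g(\hat x)=\hat x(g)=g(x)$, i.e.\ $\phi\circ f=g$, so $\vsp(f)=K$.

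The main obstacle I anticipate is step (b), verifying that $f(X)=\set{\hat x:x\in X}$ actually generates $\pol(K)$ rather than just some smaller closed subalgebra. The subtlety is that the linear functionals $\psi\in\c(X)^*$ appearing in a finite-type polynomial on $\c(X)$ are represented by measures on $X$, not by point masses, so $\psi|_K$ is not literally a polynomial in the $\hat x$'s; one must approximate. The clean way is to observe that for fixed $\psi$, the map $g\mapsto\psi(g)$ is weak-$*$ continuous and $K$ is norm-compact hence weak-$*$ compact, and to use Lemma \ref{lem:w*-conv-uni-conv}-type reasoning or a direct Riemann-sum / partition-of-unity argument on $X$ to write $\psi|_K$ as a uniform-on-$K$ limit of linear combinations of the $\hat x$; alternatively, and more slickly, one can sidestep this by noting that the closed subalgebra $B$ of $\pol(K)$ generated by $f(X)$ separates the points of $K$ (if $g_1\ne g_2$ in $K$ then $g_1(x)\ne g_2(x)$ for some $x\in X$, i.e.\ $\hat x(g_1)\ne\hat x(g_2)$) and contains the constants, so by the characterization of $\mis$ via generators (Proposition \ref{prop:if-A=<f(X)>-then-mis(A)=SP(f)} applied to $B$) together with the identification $\mis(\pol(K))=K$, one gets $\mis(B)=K=\mis(\pol(K))$; since $B\subset\pol(K)$ are uniform algebras on $K$ with the same character space and $B$ contains the polynomials $\pol_0(K)$ whose closure is all of $\pol(K)$, we conclude $B=\pol(K)$. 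I would present whichever of these is shorter once the details are checked; the generation step is where the real work lies, while continuity and the spectral identity are essentially immediate from the earlier results.
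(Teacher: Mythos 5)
Your main line of argument is essentially the paper's. For \eqref{item:exist-A-and-f}$\Rightarrow$\eqref{item:K-is-pol-convex-in-C(X)} your ``direct'' version coincides with the paper's proof: the same homomorphism $Pf\mapsto P(g)$ on $A_0$, with well-definedness and continuity coming from $|P(g)|\le\|P\|_K=\sup\{|\phi(Pf)|:\phi\in\mis(A)\}\le\|Pf\|$; note only the inequality $\|P\|_K\le\|Pf\|_A$ is true in general (the supremum over $\mis(A)$ is the spectral radius of $Pf$), not your ``$=$'', but the inequality is all you use. For \eqref{item:K-is-pol-convex-in-C(X)}$\Rightarrow$\eqref{item:exist-A-and-f} the paper makes the same choice $f(x)=\delta_x|_K$, proves continuity from equicontinuity of the compact set $K$, and settles your step (b) by your \emph{first} route: by Krein--Milman the extreme points of the closed unit ball of $\c(X)^*$ are the unimodular point masses, so every $\psi\in\c(X)^*$ is a weak* limit of a \emph{bounded} net of finite linear combinations of elements of $f(X)$, and Lemma \ref{lem:w*-conv-uni-conv} upgrades this to uniform convergence on the compact set $K$; hence $\psi|_K$, and with it all of $\pol_0(K)$, lies in the closed algebra $B$ generated by $f(X)$, giving $B=\pol(K)$. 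Your Riemann-sum/partition argument for a representing measure would do the same job (it even bypasses Krein--Milman, since the approximating combinations are bounded by $|\mu|(X)$); the identification $\vsp(f)=K$ via Theorem \ref{thm:m(P(K))=hat K} is then exactly as in the paper.

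Do drop the ``slicker'' alternative for step (b): as stated it is both circular and invalid. Point separation plus constants only embeds $K$ into $\mis(B)$; Proposition \ref{prop:if-A=<f(X)>-then-mis(A)=SP(f)} identifies $\mis(B)$ with $\vsp_B(f)$, and you cannot identify that with $K$ without already knowing that every character of $B$ is evaluation at a point of $K$ --- which is Theorem \ref{thm:m(P(K))=hat K} for $\pol(K)$, not for the a priori smaller algebra $B$. Moreover, the clause ``$B$ contains $\pol_0(K)$'' is precisely the generation statement you are trying to prove; if you had it, $B=\pol(K)$ would follow at once from closedness of $B$ and density of $\pol_0(K)$, with no character-space argument needed. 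Finally, even granting $\mis(B)=\mis(\pol(K))$, equality of character spaces does not force equality of uniform algebras on $K$ (e.g.\ $R(K)\subsetneq \c(K)$ for a Swiss-cheese set, both with character space $K$). So the approximation argument is not avoidable; present your first route, which is the paper's.
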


\begin{proof}
  $\eqref{item:K-is-pol-convex-in-C(X)} \Rightarrow \eqref{item:exist-A-and-f}$
  Assume that $K$ is polynomially convex.
  For every $x\in X$, consider the evaluation functional $\delta_x:g\mapsto g(x)$
  of $\c(X)$. Define a function $f:X\to \c(K)$ by $f(x)=\delta_x$.
  We show that $f$ is continuous. 
  Since $K$ is compact, as a subset of $\c(X)$,
  it is equicontinuous. This means that, given $\e>0$,
  every $x_0\in X$ has a neighborhood $U_{x_0}$ such that
  \[
    |g(x)-g(x_0)| \leq \e, \quad (x\in U_{x_0},g\in K).
  \]
  This implies that $\|f(x)-f(x_0)\|_K < \e$ for $x\in U_{x_0}$.
  Therefore, $f$ is continuous.

  Take $A$ as the closed subalgebra of $\c(K)$ generated by $f(X)$.
  We claim that $A=\pol(K)$. Since every element of $f(X)$ is a continuous functional
  of $\c(X)$, we have $f(X)\subset \pol(K)$ and thus $A\subset\pol(K)$.
  To see the reverse inclusion,
  we just need to show that the restriction to $K$ of
  every polynomial $P\in \pol(\c(X))$ belongs to $A$. Since every such polynomial
  is a finite linear combination of $\set{\phi^m:\phi\in \c(X)^*,m\geq 0}$, it suffices to
  show that the restriction to $K$ of every functional $\phi\in \c(X)^*$ belongs to $A$.
  This can be seen by the Krein-Millman Theorem. It is known that the set
  of extreme points of the closed unit ball of $\c(X)^*$ is exactly the
  set $f(X)=\set{\delta_x:x\in X}$ of unit point masses. Hence, by the Krein-Millman Theorem,
  every functional $\phi\in \c(X)^*$ is a weak* limit of finite linear combinations
  of elements of $f(X)$. By Lemma \ref{lem:w*-conv-uni-conv}, convergence with respect
  to the weak* topology yields uniform convergence on compact sets. Hence,
  every functional $\phi\in \c(X)^*$ is a uniform limit, on $K$, of elements
  of $A$. We conclude that $\pol(K)\subset A$.

  By Theorem \ref{thm:m(P(K))=hat K}, since $K$ is polynomially convex, we get
  $\mis(A)=K$. Also, for every $g\in K$, we have $\phi_g\circ f=g$, for
  \[
    (\phi_g\circ f)(x) = \phi_g(f(x)) = \phi_g(\delta_x) = \delta_x(g)=g(x),
    \quad(x\in X).
  \]
  Therefore,
  \[
    \vsp(f) = \set{\phi \circ f: \phi\in \mis(A)}
    = \set{\phi_g\circ f: g \in K}
    = \set{g : g\in K} = K.
  \]

  $\eqref{item:exist-A-and-f} \Rightarrow \eqref{item:K-is-pol-convex-in-C(X)}$
  Assume that $K=\vsp(f)$, where $f:X\to A$ is a continuous function with
  $A$ a unital commutative Banach algebra generated by $f(X)$. Take a function
  $g\in \hat K$. To verify that $g\in K$, we show that $g=\phi\circ f$,
  for some character $\phi\in\mis(A)$.
  Let $A_0=\set{Pf:P\in\pol(\c(X))}$ be the subalgebra of
  $A$ defined in Lemma \ref{lem:P(f)-is-dense-in-A}.
  Define a mapping $\phi_g:A_0\to\C$ by $\phi_g(Pf)=Pg$.
  We show that $\phi_g$ is a well-defined continuous homomorphism.
  Since $g\in \hat K$, we have, for every $P\in \pol(\c(X))$,
  \begin{equation}\label{eqn:Pf-Pg}
  \begin{split}
    |\phi_g(Pf)| & = |Pg| \leq \|P\|_K \\
     & = \sup\{|P(\phi\circ f)|: \phi\in \mis(A))\} \\
     & = \sup\{|\phi(P f)|: \phi\in \mis(A)\} \leq \|P f\|.
  \end{split}
  \end{equation}

  Therefore, $Pf=0$ implies $Pg=0$ which shows that $\phi_g$ is well-defined.
  Obviously, $\phi_g$ is a homomorphism. Also \eqref{eqn:Pf-Pg} shows that
  $\phi_g$ is continuous on $A_0$. Since $A$ is generated by $f(X)$,
  by Lemma \ref{lem:P(f)-is-dense-in-A}, $A_0$ is dense in $A$. Hence, $\phi_g$ extends
  to a character of $A$, still denoted by $\phi_g$. We have
  \begin{equation*}
    (\phi_g\circ f)(x) = \phi_g(f(x)) = \phi_g(\delta_x f)=\delta_x(g) =g(x), \quad (x\in X).
  \end{equation*}
  Therefore, $g=\phi_g\circ f$ and thus $g\in K$.
\end{proof}

We conclude the paper with our final result on characterising polynomially convex
sets in arbitrary Banach spaces.

\begin{thm}\label{thm:main}
  For a compact set $K$ in a Banach space $E$, the following are equivalent.
  \begin{enumerate}[\upshape(i)]
    \item The set $K$ is polynomially convex in $E$.
    \item There is a unital commutative Banach algebra $A$
    and a continuous function $f:\X\to A$ such that $A$ is
    generated by $f(\X)$ and $K=\vsp(f)$. Moreover, $K$
    is homeomorphic to $\mis(A)$.
  \end{enumerate}
\end{thm}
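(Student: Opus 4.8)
The strategy is to reduce the statement to Theorem~\ref{thm:K-subset-C(X)} via the canonical isometric embedding $x\mapsto\tilde x$ of $E$ into $\c(\X)$ recalled in Section~\ref{sec:intro}. Identifying $E$ with its (closed) image in $\c(\X)$, a compact set $K\subset E$ becomes a compact subset of $\c(\X)$, and for a continuous $f:\X\to A$ the equality $K=\vsp(f)$ then has a literal meaning as an identity of subsets of $\c(\X)$. The first thing I would record is that, since $E$ is a closed subspace of $\c(\X)$, Proposition~\ref{prop:KE=KF} (applied with $F=\c(\X)$) gives $\hat K_E=\hat K_{\c(\X)}$; consequently $K$ is polynomially convex in $E$ if and only if $K$ is polynomially convex in $\c(\X)$.

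With this in hand the two implications are short. For $(i)\Rightarrow(ii)$: if $K$ is polynomially convex in $E$ then, by the above, it is polynomially convex in $\c(\X)$, and applying Theorem~\ref{thm:K-subset-C(X)} with the compact Hausdorff space $X=\X$ produces a unital commutative Banach algebra $A$ and a continuous $f:\X\to A$ with $f(\X)$ generating $A$ and $K=\vsp(f)$; the remaining claim that $K$ is homeomorphic to $\mis(A)$ is then exactly Proposition~\ref{prop:if-A=<f(X)>-then-mis(A)=SP(f)}. For $(ii)\Rightarrow(i)$: given $A$ and $f:\X\to A$ as in (ii) with $K=\vsp(f)$, Theorem~\ref{thm:K-subset-C(X)} (again with $X=\X$) says that $\vsp(f)$, hence $K$, is polynomially convex in $\c(\X)$, and then by the first paragraph $K$ is polynomially convex in $E$.

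So the theorem is really a transfer result, and the one point that needs genuine care---and where I expect the only (minor) obstacle---is the bookkeeping of identifications: the joint spectrum $\vsp(f)$ is \emph{a priori} a set of scalar functions on $\X$, and one must be sure it is legitimately viewed as a subset of $E\subset\c(\X)$. In the direction $(i)\Rightarrow(ii)$ this is automatic, because Theorem~\ref{thm:K-subset-C(X)} delivers $\vsp(f)$ precisely as the given set $K$; in the direction $(ii)\Rightarrow(i)$ it is built into the hypothesis, since $K$ is assumed to be a compact set in $E$. Once this convention is pinned down, the two appeals to Theorem~\ref{thm:K-subset-C(X)} together with Proposition~\ref{prop:KE=KF} complete the proof; in particular no new analysis on $\X$ is needed beyond the fact, already noted in Section~\ref{sec:intro}, that $\X$ is compact Hausdorff.
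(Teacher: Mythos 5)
Your proof is correct and is essentially the paper's own argument: the paper's proof is a one-line reduction citing exactly the same ingredients (the isometric embedding $E\hookrightarrow\c(\X)$, Proposition~\ref{prop:KE=KF}, Proposition~\ref{prop:if-A=<f(X)>-then-mis(A)=SP(f)}, and Theorem~\ref{thm:K-subset-C(X)}), and you have merely written out the transfer and the identification bookkeeping in detail.
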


\begin{proof}
  It follows from the fact that $E$ is isometrically isomorphic
  to a closed subspace of $\c(\X)$, along with
  propositions \ref{prop:KE=KF} and \ref{prop:if-A=<f(X)>-then-mis(A)=SP(f)},
  and Theorem \ref{thm:K-subset-C(X)}.
\end{proof}

Aa an illustration of Theorem \ref{thm:main}, the following example is given.

\subsubsection*{Example}
  Let $E$ be an arbitrary Banach space and let $K=\set{e_0,e_1,e_2,\dotsc}$ be a
  set in $E$ with $e_n\to e_0$. We show that $K$ is polynomially convex. Take
  $A=\c(K)$, which is, in fact, the space of all convergent complex sequences.
  Define a function $f:\X\to A$ by $f(\psi)(x)=\psi(x)$, $x\in K$.
  Since $K$ is compact, by Lemma \ref{lem:w*-conv-uni-conv}, $f$ is continuous.
  Let $s_j=(\delta_{ij})_{i=1}^\infty$, where $\delta_{ij}$ is the Kronecker delta.
  Then $S=\{s_1,s_2,\dotsc\}$ generates $A$ and $S\subset f(\X)$.
  Hence, $f(\X)$ generates $A$. Finally, since $\mis(A)=\set{\delta_{x}:x\in K}$
  and
  \[
    \delta_{x}\circ f(\psi)=\delta_{x}(f(\psi))=f(\psi)(x)=\psi(x)=\delta_x(\psi) \quad (x\in K)
  \]
  we get
  \[
    \vsp(f)=\set{\delta_x\circ f: x\in K} = \set{\delta_x:x\in K} = K.
  \]

  To be more precise, note that the last equality is, in fact, a homeomorphism.
  All requirements in Theorem \ref{thm:main} are fulfilled and $K$ is polynomially convex.

\subsection*{Acknowledgment}
The authors express their sincere gratitude to the referee for his/her careful
reading and suggestions that improved the presentation of this paper.

\end{document}